\newcommand{\mint}[1]{\int_{M} {#1} \ dV_{g}}
\newcommand{\mmmint}[1]{\int_{M} \left\{{#1}\right\} dV_{g}}
\theoremstyle{plain}
\newtheorem{theorem}{Theorem}
\newtheorem{proposition}{Proposition}
\newtheorem{corollary}{Corollary}
\newcommand{\remark}[1]{{\noindent\textbf{Remark.} {#1}}}
\newcommand{\thref}[1]{Theorem \hyperref[#1]{\ref*{#1}}}
\newcommand{\propref}[1]{Proposition \hyperref[#1]{\ref*{#1}}}
\newcommand{\corref}[1]{Corollary \hyperref[#1]{\ref*{#1}}}
\newcommand{\lemref}[1]{Lemma \hyperref[#1]{\ref*{#1}}}
\title{New Rigidity Results for critical Metrics of some quadratic curvature functionals}
\author{Marco Bernardini}
\begin{document}
	
	\begin{abstract}
		We prove a new rigidity result for metrics defined on closed smooth $ n $-manifolds that are critical for the quadratic functional $ \mathfrak{F}_{t} $, which depends on the Ricci curvature $ Ric $ and the scalar curvature $ R $, and that satisfy a pinching condition of the form $ Sec > \epsilon R $, where $ \epsilon $ is a function of $ t $ and $ n $, while $ Sec $ denotes the sectional curvature. In particular, we show that Bach-flat metrics with constant scalar curvature satisfying $ Sec > \frac{1}{48} R $ are Einstein and, by a known result, are isometric to $ \mathbb{S}^{4} $, $ \mathbb{RP}^{4} $ or $ \mathbb{CP}^{2} $.
	\end{abstract}
	
	\maketitle
	
	\begin{center}
		\noindent{\it Key Words: Einstein metrics, Bach-flat metrics, sectional curvature pinching, quadratic curvature functionals}
		\medskip
		\centerline{\bf AMS subject classification:  53C24, 53C25}
	\end{center}
	
	\section{Introduction}
	
	Given $ M^{n} $ closed smooth manifold, we denote with $ \mathcal{M}_{1}(M^{n}) $ the space of smooth Riemannian metrics yielding unitary volume on $ M^{n} $. Given a Riemannian metric $ g $ on $ M^{n} $, let $ Riem $, $ W $, $ Ric $, $ R $ be the corresponding Riemann, Weyl, Ricci and scalar curvatures, respectively.
	\\Among all the metrics $ g \in \mathcal{M}_{1}(M^{n}) $, we can isolate "special" ones by looking among the critical metrics of some curvature functional. Indeed, it is well known \cite{besse_einstein_1987} that Einstein metrics, metrics for which $ Ric = \lambda g $ with $ \lambda \in \mathbb{R} $, arise as critical points of the Einstein-Hilbert functional:
	\[ \mathfrak{S}(g) = \mint{R_{g}} \]
	Since Einstein metrics arise naturally as critical points of such a basic geometric functional, it is legitimate to investigate more generic curvature functionals and understand when it is still the case.
	\\A basis of quadratic curvature functionals is given by:
	\[ \mathfrak{W} = \mint{|W|^{2}} \qquad \mathfrak{r} = \mint{|Ric|^{2}} \qquad \mathfrak{S}_{2} = \mint{R^{2}} \]
	and, via the Riemann tensor decomposition, we obtain:
	\begin{align*}
		\mathfrak{R} = \mint{|Riem|^{2}} &= \mmmint{|W|^{2} + \frac{4}{n-2} |Ric|^{2} - \frac{2}{(n-1)(n-2)} R^{2}} \\
		&= \mathfrak{W} + \frac{4}{n-2} \mathfrak{r} - \frac{2}{(n-1)(n-2)} \mathfrak{S}_{2}
	\end{align*}
	If $ n = 4 $, the Chern-Gauss-Bonnet formula holds:
	\begin{equation}\label{eq:ChernGaussBonnet}
		\mmmint{|W|^{2} - 2 |Ric|^{2} + \frac{2}{3} R^{2}} = 32 \pi^{2} \scalebox{1.3}{$\chi$}(M)
	\end{equation}
	which tells us that $ \mathfrak{W} $ is a linear combination of $ \mathfrak{r} $, $ \mathfrak{S}_{2} $ and the topological term $ \scalebox{1.3}{$\chi$}(M) $, which is the Euler characteristic of $ M $.
	\\The study of quadratic Riemannian functionals was initiated by Berger \cite{berger_quelques_1970}, see \cite[Chapter 4]{besse_einstein_1987} for a survey.
	\\Since for $ n > 4 $ Einstein metrics are not generally critical points of the $ \mathfrak{W} $ functional on $ \mathcal{M}_{1}(M^{n}) $, we will focus only on the following curvature functional:
	\[ \mathfrak{F}_{t} = \mint{|Ric|^{2}} + t \mint{R^{2}} = \mathfrak{r} + t \mathfrak{S}_{2} \]
	with $ t \in \mathbb{R} $ and where $ t = - \infty $ formally corresponds to the curvature functional $ \mathfrak{S}_{2} $. Being $ \mathfrak{F}_{t} $ not scale-invariant for $ n > 4 $, it is natural to study it on $ \mathcal{M}_{1}(M^{n}) $ (or to properly renormalize it by the volume of the manifold). Such functional was first introduced by Berger in \cite{berger_quelques_1970} and later on studied by many, see \cite{besse_einstein_1987, gursky_rigidity_2015, catino_rigidity_2015}.
	\\While every Einstein metric is critical for $ \mathfrak{F}_{t} $, as already observed in \cite{besse_einstein_1987}, the converse generally fails. As an example, in case $ n = 4 $ and $ t = - \frac{1}{3} $, Bach-flat metrics are the critical metrics for $ \mathfrak{F}_{t} $, but generally these are not Einstein metrics (for further counter examples see \cite[Chapter 4]{besse_einstein_1987} and \cite{lamontagne_critical_1993}). Hence, it is natural to understand under which conditions a critical metric for $ \mathfrak{F}_{t} $ is, infact, Einstein.
	\\Such additional constraints usually take the shape of some pointwise or, more weakly, of some integral-type restriction of the critical metric's curvature. For instance, in \cite[Proposition 1.1]{anderson_extrema_1997} the scalar curvature is assumed to have definite sign (which actually holds for any dimension by \cite[Proposition 3.1]{catino_critical_2014}) for critical metrics on $ \mathcal{M}_{1}(M^{3}) $ of $ \mathfrak{S}_{2} $; in \cite{lamontagne_remarque_1994} it is proved that every critical metric for $ \mathfrak{F}_{-1/3} $ on $ \mathcal{M}_{1}(M^{4}) $ (variationally equivalent to $ \mathfrak{S}_{2} $) with non-positive sectional curvature is Einstein; for $ \mathfrak{F}_{-1/3}$ on $ \mathcal{M}_{1}(M^{3}) $ in \cite{tanno_deformations_1975} a pointwise pinching condition on the Ricci curvature is assumed; for $ \mathfrak{F}_{-3/8} $ on $ \mathcal{M}_{1}(M^{3}) $ in \cite{gursky_new_2001} it is proved that every critical metric must be Einstein, hence a space form, just by assuming an integral condition, namely $ \mathfrak{F}_{-3/8} \leq 0 $ (this result was extended in \cite{hu_new_2004} for dimension greater than four in the locally conformally flat case); finally, in \cite{catino_rigidity_2015} the author assumed nonnegative sectional curvature in order to prove that critical metrics for $ \mathfrak{F}_{t}$ on $ \mathcal{M}_{1}(M^{n}) $ are Einstein, provided $ t < - \frac{1}{2} $ (in particular, \thref{th:Io_1} below extends  \cite[Theorem 1.1]{catino_rigidity_2015}). Indeed, it is well known that compact Einstein manifolds can be classified, provided they are enough positively curved. Infact, classification results are obtained in \cite{brendle_einstein_2010} by assuming nonnegative isotropic curvature; in \cite{berger_quelques_1970} via the assumption of a weak $ \frac{1}{4} $-pinching condition on the curvature (that in dimension four is implied by the assumption $ Sec \geq \frac{1}{24} R $); in dimension four $ \mathbb{S}^{4} $, $ \mathbb{RP}^{4} $ or $ \mathbb{CP}^{2} $ are isolated in \cite{yang_rigidity_2000} by requiring $ Sec \geq \epsilon R $ with $ \epsilon = \frac{\sqrt{1249} - 23}{480} $. Such lower bound was improved up to $ \epsilon = \frac{1}{48} $ in \cite{ribeiro_rigidity_2016} and is conjectured in \cite{yang_rigidity_2000} to be improvable up to $ \epsilon = 0 $.
	\\ \\In this paper we present new rigidity results regarding critical metrics for $ \mathfrak{F}_{t} $ on $ \mathcal{M}_{1}(M^{n}) $.
	\\We have found that:
	\begin{theorem}\label{th:Io_1}
		Let $ M^{n} $ closed smooth n-manifold with $ n \geq 3 $, let $ g $ be a critical metric for $ \mathfrak{F}_{t} $ on $ \mathcal{M}_{1}(M^{n}) $ with $ t \leq  - \frac{1}{2} $, such that $ R \geq 0 $ and $ Sec > \frac{1 + 2 t}{(n-2)^{2}} R $. Then, $ g $ is Einstein.
	\end{theorem}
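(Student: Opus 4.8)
The plan is to derive the Euler--Lagrange equation satisfied by the critical metric, reduce the statement to proving that the trace-free Ricci tensor $E := Ric - \tfrac{R}{n}\,g$ vanishes identically, and then extract from a single weighted integral identity a quadratic expression whose sign is governed exactly by the pinching hypothesis.

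First I would record the critical point equation. Since $g$ is critical for $\mathfrak{F}_{t}$ on $\mathcal{M}_{1}(M^{n})$, the trace-free part of the gradient of $\mathfrak{F}_{t}$ must vanish; using the contracted second Bianchi identity $\mathrm{div}\,Ric = \tfrac12\nabla R$, this can be put in the schematic form $\Delta E - 2\,\mathring{Rm}(Ric) + (1+2t)\,\bigl(\nabla^{2} R\bigr)^{\!\circ} + (\text{zeroth-order } R\text{-terms}) = 0$, where $\mathring{Rm}(Ric)_{ij} = R_{ikjl}Ric^{kl}$ and $(\,\cdot\,)^{\circ}$ denotes the trace-free part. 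The factor $(1+2t)$ multiplying the Hessian of $R$ is what ultimately produces the pinching constant and explains why $t=-\tfrac12$ plays a distinguished role.

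Next I would contract this tensor equation with $E$ and integrate over the closed manifold $M$. Integration by parts converts $\int\langle\Delta E,E\rangle$ into $-\int|\nabla E|^{2}$; since $\mathrm{div}\,E=\tfrac{n-2}{2n}\nabla R$, the Hessian term becomes a multiple of $\int|\nabla R|^{2}$; and the curvature term yields $\int R_{ikjl}E^{ij}E^{kl}+\tfrac1n\int R|E|^{2}$. One is left with a single scalar identity relating $\int|\nabla E|^{2}$, $\int|\nabla R|^{2}$, $\int R|E|^{2}$ and $\int R_{ikjl}E^{ij}E^{kl}$. Here the hypotheses $t\le-\tfrac12$ and $R\ge 0$ enter, to fix the signs of the two scalar-curvature terms, and the $(n-2)^{2}$ appearing in the pinching constant should emerge from the factor $\tfrac{n-2}{2n}$ relating $\mathrm{div}\,E$ to $\nabla R$.

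The core of the proof is a pointwise algebraic estimate of the curvature term. Diagonalizing $E$ in an orthonormal frame with eigenvalues $\mu_{1},\dots,\mu_{n}$, so that $\sum_{i}\mu_{i}=0$, one has the exact identity
\[ R_{ikjl}E^{ij}E^{kl}=\sum_{i} Ric_{ii}\,\mu_{i}^{2}-\sum_{i<k}Sec_{ik}\,(\mu_{i}-\mu_{k})^{2}, \]
together with $\sum_{i<k}(\mu_{i}-\mu_{k})^{2}=n|E|^{2}$ and $\sum_{i} Ric_{ii}\,\mu_{i}^{2}=\tfrac{R}{n}|E|^{2}+\mathrm{tr}(E^{3})$, where $Ric_{ii}=\sum_{k\ne i}Sec_{ik}$. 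Feeding in $Sec_{ik}>\tfrac{1+2t}{(n-2)^{2}}R$ bounds the second sum below by $\tfrac{1+2t}{(n-2)^{2}}\,nR|E|^{2}$, which is precisely calibrated to offset the scalar-curvature contributions in the integral identity. I expect the main obstacle to be the cubic term $\mathrm{tr}(E^{3})$: the same pinching also forces a two-sided bound on the eigenvalues (through $Ric_{ii}=\sum_{k\ne i}Sec_{ik}>(n-1)\tfrac{1+2t}{(n-2)^{2}}R$ together with $\sum_{i}\mu_{i}=0$), which should let me absorb $\mathrm{tr}(E^{3})$ into $R|E|^{2}$; making this absorption succeed with the sharp constant $\tfrac{1+2t}{(n-2)^{2}}$ is the delicate calibration at the heart of the argument. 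Once the identity is rearranged into a sum of nonnegative terms---this is exactly where $t\le-\tfrac12$ and $R\ge 0$ are essential---it forces $\nabla E\equiv 0$ and the curvature estimate to hold with equality; the strictness of the pinching then excludes $E\ne 0$, so $E\equiv 0$ and $g$ is Einstein.
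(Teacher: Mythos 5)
\noindent Your preparatory steps are sound and coincide with the paper's: integrating the contracted Euler--Lagrange equation against $E=\mathring{Ric}$ gives precisely the identity of \propref{th:Cat_5.16}, and your diagonal-frame identity $R_{ikjl}E_{ij}E_{kl}=\operatorname{tr}(E^{3})+\tfrac{R}{n}|E|^{2}-\sum_{i<k}Sec_{ik}(\mu_{i}-\mu_{k})^{2}$ is exactly what underlies \propref{th:sectional}. The gap is the final ``delicate calibration'', and it is not a technical one: it cannot be made to work. Since your scheme feeds only \emph{pointwise} curvature estimates into that single integral identity, the best you could possibly use is the sharpest pointwise bound of the form $R_{ikjl}E_{ij}E_{kl}\le C(n,\epsilon)\,R|E|^{2}$ valid under $Sec\ge\epsilon R$ (the cubic term must be eliminated, because the plain identity contains nothing to cancel it against). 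That sharp bound is
\begin{equation*}
R_{ikjl}E_{ij}E_{kl}\;\le\;\Bigl(\tfrac{n-2}{2n}-\tfrac{n^{2}-3n+4}{2}\,\epsilon\Bigr)R|E|^{2},
\end{equation*}
i.e.\ the $s=\frac{n-1}{n}$ case of \corref{th:Cat_5.18}, and it is attained: take the algebraic curvature tensor which, in a frame diagonalizing $E$, has $Sec_{ik}=\epsilon R$ on every coordinate plane except $Sec_{12}=\epsilon R+\tau$ with $\tau>0$, so that $\mu_{1}=\mu_{2}=m=\frac{(n-2)\tau}{n}$ and $\mu_{j}=-\frac{2m}{n-2}$ for $j\ge 3$. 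Inserting this bound into the integral identity forces the threshold $\epsilon\ge\frac{1+2t}{n^{2}-3n+4}$, which for $t<-\frac12$ is \emph{strictly larger} than the theorem's constant $\frac{1+2t}{(n-2)^{2}}$, because $1+2t<0$ and $(n-2)^{2}<n^{2}-3n+4$ (the two coincide only at the endpoint $t=-\frac12$). Concretely, on the configuration above with $\epsilon=\frac{1+2t}{(n-2)^{2}}<0$ one computes $R_{ikjl}E_{ij}E_{kl}+\frac{1+nt}{n}R|E|^{2}=-\frac{n^{2}\epsilon}{n-2}\,Rm^{2}>0$, and this persists under a small perturbation making the pinching strict; so the pointwise inequality your absorption is supposed to deliver is false. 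Your particular absorption (two-sided eigenvalue bounds) is lossier still: it yields the threshold $\frac{n+1+nt}{n(n^{2}-n+1)}$, which is positive even at $t=-\frac12$, where the theorem assumes only $Sec>0$.

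\noindent What is missing is the paper's key device: the auxiliary three-tensor $F_{ijk}=\nabla_{k}\mathring{R}_{ij}+a_{1}\nabla_{j}\mathring{R}_{ik}+a_{2}\nabla_{i}\mathring{R}_{jk}+b_{1}\nabla_{k}R\,g_{ij}+b_{2}\nabla_{j}R\,g_{ik}+b_{3}\nabla_{i}R\,g_{jk}$. Expanding $\int_{M}|F|^{2}\,dV_{g}\ge 0$, the cross term $\int_{M}\nabla_{k}\mathring{R}_{ij}\nabla_{j}\mathring{R}_{ik}\,dV_{g}$ is converted, by integration by parts and the commutation rule for covariant derivatives, into additional multiples of $\int R_{ikjl}E_{ij}E_{kl}$, $\int\operatorname{tr}(E^{3})$, $\int R|E|^{2}$ and $\int|\nabla R|^{2}$, with coefficients tunable through $(a_{1},a_{2},b_{1},b_{2},b_{3})$. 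The extra cubic term is what allows $\operatorname{tr}(E^{3})$ to be cancelled while keeping $s\in(0,1)$ in \corref{th:Cat_5.18}, and optimizing over $(a_{1},a_{2})$ --- the optimum being $(-2,1)$ --- replaces the denominator $n^{2}-3n+4$ (which corresponds exactly to your scheme, i.e.\ $a_{1}=a_{2}=0$) by $(n-2)^{2}$; the $b_{i}$ are then chosen to annihilate the total $|\nabla R|^{2}$ coefficient, which is why no constancy of $R$ needs to be assumed for $t\le-\frac12$. This input is genuinely integral --- nonnegativity of $\int|F|^{2}$ plus derivative commutation --- so no refinement of pointwise eigenvalue estimates can substitute for it.
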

	\begin{theorem}\label{th:Io_2}
		Let $ M^{n} $ closed smooth n-manifold with $ n \geq 3 $, let $ g $ be a critical metric for $ \mathfrak{F}_{t} $ on $ \mathcal{M}_{1}(M^{n}) $ with $ t > - \frac{1}{2} $, such that $ R = const \geq 0 $ and $ Sec > \frac{1 + 2t}{n^{2} - n + 4} R $. Then, $ g $ is Einstein.
	\end{theorem}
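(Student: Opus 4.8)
The plan is to exploit the Euler--Lagrange equation satisfied by critical metrics of $\mathfrak{F}_{t}$ and to use the constant scalar curvature hypothesis to reduce it to a clean integral identity for the trace-free Ricci tensor $E := Ric - \frac{R}{n}g$, whose vanishing is equivalent to $g$ being Einstein. Recall (see \cite{besse_einstein_1987, catino_rigidity_2015}) that criticality forces the gradient of $\mathfrak{F}_{t}$ to be a multiple of $g$; this gradient is built from $\Delta Ric$, $\nabla^{2}R$, $\Delta R\,g$, the contraction $R_{ikjl}R^{kl}$, the quadratic terms $R_{ik}R^{k}{}_{j}$ and $R\,Ric$, and pointwise multiples of $g$. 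The first step is to observe that $R \equiv \text{const}$ kills every derivative-of-$R$ contribution ($\nabla^{2}R = 0$, $\Delta R = 0$), so that the equation becomes a relation in $Ric$ alone, and that by the contracted second Bianchi identity $E$ is divergence-free. A second, useful remark is that the hypotheses already force $R > 0$: were $R = 0$ the pinching would read $Sec > 0$, whence $R = 2\sum_{i<k}Sec(e_{i},e_{k}) > 0$, a contradiction. Thus we may assume throughout that $R$ is a positive constant.

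Next I would take the trace-free part of the reduced equation, contract it with $E$, and integrate over the closed manifold. Integration by parts turns the Laplacian term into $-\mint{|\nabla E|^{2}}$, while every pure-$g$ term is annihilated by the contraction because $\operatorname{tr}E = 0$. This produces an identity of the form
\[ \mint{|\nabla E|^{2}} = \mint{A\,R_{ikjl}R^{kl}E^{ij} + B\,R_{ik}R^{k}{}_{j}E^{ij} + C\,R\,|E|^{2}}, \]
with constants $A, B, C$ depending only on $n$ and $t$; the coefficients $A, B$ of the quadratic curvature terms are inherited from $\mathfrak{r}$ and are $t$-independent, whereas $C$ carries the factor $1 + 2t$ (the $2t$ from $\mathfrak{S}_{2}$, the $1$ from the $\mathfrak{r}$/Bochner curvature terms).

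The geometric core is the pointwise evaluation of these contractions in an orthonormal frame diagonalising $E$, with eigenvalues $\lambda_{1},\dots,\lambda_{n}$, $\sum_{i}\lambda_{i} = 0$, in which $Ric$ is diagonal with $R_{ii} = \lambda_{i} + \frac{R}{n}$ and $\sum_{k \neq i}Sec(e_{i},e_{k}) = R_{ii}$. Writing $K_{ik} := Sec(e_{i},e_{k})$ and $P := \sum_{i<k}K_{ik}(\lambda_{i}-\lambda_{k})^{2}$, the identity $2\lambda_{i}\lambda_{k} = \lambda_{i}^{2}+\lambda_{k}^{2}-(\lambda_{i}-\lambda_{k})^{2}$ together with $\sum_{k\neq i}K_{ik}=R_{ii}$ yields the key formula
\[ R_{ikjl}E^{ij}E^{kl} = \sum_{i\neq k}K_{ik}\lambda_{i}\lambda_{k} = \operatorname{tr}(E^{3}) + \tfrac{R}{n}|E|^{2} - P. \]
Hence, using $Ric = E + \frac{R}{n}g$, one finds $R_{ikjl}R^{kl}E^{ij} = \operatorname{tr}(E^{3}) + \frac{2R}{n}|E|^{2} - P$ and $R_{ik}R^{k}{}_{j}E^{ij} = \operatorname{tr}(E^{3}) + \frac{2R}{n}|E|^{2}$. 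Substituting, the non-sign-definite cubic terms $\operatorname{tr}(E^{3})$ cancel by virtue of the relation $A + B = 0$ forced by the $\mathfrak{r}$-gradient, leaving
\[ \mint{|\nabla E|^{2}} = C\,\mint{R\,|E|^{2}} - A\,\mint{P}. \]
Now $Sec > \frac{1+2t}{n^{2}-n+4}R$ together with $\sum_{i<k}(\lambda_{i}-\lambda_{k})^{2} = n|E|^{2}$ gives $P > \frac{1+2t}{n^{2}-n+4}\,nR|E|^{2}$ pointwise wherever $E \neq 0$, and $\frac{1+2t}{n^{2}-n+4}$ is exactly the value for which the net coefficient of $\mint{R\,|E|^{2}}$ vanishes (i.e. $C = \frac{1+2t}{n^{2}-n+4}An$, with $A > 0$). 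The right-hand side is therefore strictly negative unless $E \equiv 0$, while the left-hand side is nonnegative; hence $E \equiv 0$ and $g$ is Einstein.

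The step I expect to be the main obstacle is the exact bookkeeping of the Euler--Lagrange equation: both pinning down the coefficients $A, B, C$ and, above all, verifying that the cubic contributions $\operatorname{tr}(E^{3})$ genuinely cancel (equivalently, that $A+B=0$) rather than merely being estimated, since it is this cancellation that allows the threshold to be linear in $R$ and to come out exactly as $\frac{1+2t}{n^{2}-n+4}$. The split from \thref{th:Io_1} is governed by the sign of $1 + 2t$: for $t > -\frac{1}{2}$ the relevant curvature term has the sign that forces one to discard the $\nabla^{2}R$ and $\Delta R\,g$ contributions through the $R = \text{const}$ hypothesis, which is precisely why the present theorem requires constant scalar curvature whereas \thref{th:Io_1} does not.
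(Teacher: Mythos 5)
Your reduction (use $R=\mathrm{const}$ in the Euler--Lagrange system \eqref{eq:EL_Ft}, contract with $E=\mathring{Ric}$, integrate by parts, diagonalize) is the same starting point as the paper's, and your pointwise identity $R_{ikjl}E_{ij}E_{kl}=\operatorname{tr}(E^{3})+\frac{R}{n}|E|^{2}-P$ is correct --- it is in essence the proof of \propref{th:sectional}. The fatal gap is exactly the step you flagged: the cubic terms do \emph{not} cancel. The gradient of $\mathfrak{r}$ is
\begin{equation*}
\nabla \mathfrak{r} = - \Delta Ric - 2 R_{ikjl} R_{kl} - \tfrac{1}{2} \Delta R\, g + \tfrac{1}{2} |Ric|^{2} g + Hess\, R ,
\end{equation*}
with no $R_{ik}R_{kj}$ term whatsoever (adding $+2R_{ik}R_{kj}$ would violate the scaling identity $\mint{g^{ij}(\nabla\mathfrak{r})_{ij}}=\frac{n-4}{2}\,\mathfrak{r}$ and be inconsistent with the traceless equation \eqref{eq:EL_Ft}); so in your notation $B=0$, $A=2$, and the relation $A+B=0$ on which your cancellation rests is false. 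Consequently, for $R$ constant the correct integral identity is
\begin{equation*}
\mint{|\nabla E|^{2}} = 2\mint{\operatorname{tr}(E^{3})} + \frac{2(2+nt)}{n}\mint{R\,|E|^{2}} - 2\mint{P},
\end{equation*}
and the sign-indefinite term $\operatorname{tr}(E^{3})$ survives, so your concluding inequality does not follow. (Your coefficient bookkeeping is also off: here $C=\frac{2(2+nt)}{n}$, which is not proportional to $1+2t$ and does not satisfy $C=\frac{1+2t}{n^{2}-n+4}An$.)

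Moreover, even after this is repaired, your route cannot reach the stated pinching constant. The only pointwise mechanism available to dispose of $\operatorname{tr}(E^{3})$ is to combine your estimate with the second inequality of \propref{th:sectional}, where the cubic term enters with coefficient $-(n-1)$, i.e.\ to use the convex combination of \corref{th:Cat_5.18}; requiring the cubic coefficient to vanish then forces $s=\frac{n-1}{n}$, and the condition $Q_{RRc}\geq 0$ yields the threshold $\epsilon=\frac{1+2t}{n^{2}-3n+4}$ (Berger's $\frac{1}{24}$ for $n=4$, $t=-\frac{1}{3}$), strictly weaker than the claimed $\frac{1+2t}{n^{2}-n+4}$ (which is $\frac{1}{48}$ in \corref{th:Io_3}). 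The paper closes exactly this gap with an ingredient your proposal has no analogue of: the auxiliary $3$-tensor $F_{ijk}=\nabla_{k}\mathring{R}_{ij}+a_{1}\nabla_{j}\mathring{R}_{ik}+a_{2}\nabla_{i}\mathring{R}_{jk}+b_{1}\nabla_{k}R\,g_{ij}+b_{2}\nabla_{j}R\,g_{ik}+b_{3}\nabla_{i}R\,g_{jk}$ with the optimal choice $a_{1}=a_{2}=1$. The nonnegativity of $\mint{|F|^{2}}$, together with integration by parts of the cross term $\nabla_{k}\mathring{R}_{ij}\nabla_{j}\mathring{R}_{ik}$, injects additional $R_{ikjl}\mathring{R}_{ij}\mathring{R}_{kl}$, $\mathring{R}_{ij}\mathring{R}_{il}\mathring{R}_{jl}$ and $R|\mathring{Ric}|^{2}$ contributions, which shift the admissible cancellation value of $s$ and improve the denominator from $n^{2}-3n+4$ to $n^{2}-n+4$. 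Without this step (or an equivalent mechanism), your argument proves at best a weaker theorem.
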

	By a direct application of \thref{th:Io_2}, we have the following result in dimension four and $ t = - \frac{1}{3} $, for which Bach-flat metrics are the critical points of $ \mathfrak{F}_{- \frac{1}{3}} $:
	\begin{corollary}\label{th:Io_3}
		Let $ M^{4} $ closed smooth $ 4 $-manifold, let $ g $ be a Bach-flat metric, such that $ R = const \geq 0 $ and $ Sec > \frac{1}{48} R $. Then, $ g $ is Einstein. Moreover, $ (M^{4},g) $ is isometric to $ \mathbb{S}^{4} $, $ \mathbb{RP}^{4} $ or $ \mathbb{CP}^{2} $ with standard metrics.
	\end{corollary}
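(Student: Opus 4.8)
The plan is to read the corollary as the specialization of \thref{th:Io_2} to the single pair of parameters $n = 4$, $t = -\frac{1}{3}$, and then to hand the final isometry statement to the known Einstein classification cited in the introduction. So the first thing to settle is a normalization issue: \thref{th:Io_2} is stated for $g \in \mathcal{M}_{1}(M^{4})$, i.e. for unit-volume metrics, whereas the corollary imposes no volume constraint. This is not a genuine restriction, because all four hypotheses are invariant under a homothety $g \mapsto c^{2} g$: the vanishing of the Bach tensor is scale-invariant, $R = const \geq 0$ stays constant and nonnegative, and since $Sec$ and $R$ both rescale like $c^{-2}$ their ratio is preserved, so $Sec > \frac{1}{48} R$ persists. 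Hence, replacing $g$ by $\mathrm{Vol}_{g}(M)^{-1/2}\, g$, I may assume $g \in \mathcal{M}_{1}(M^{4})$, prove it is Einstein there, and transfer the conclusion back to the original metric.

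Next I would invoke the characterization recalled in the introduction that in dimension four, for $t = -\frac{1}{3}$, the critical metrics of $\mathfrak{F}_{-1/3}$ on $\mathcal{M}_{1}(M^{4})$ are exactly the Bach-flat ones; thus our (normalized) $g$ is such a critical metric. Since $t = -\frac{1}{3} > -\frac{1}{2}$, the hypothesis on $t$ in \thref{th:Io_2} is satisfied, and $R = const \geq 0$ is assumed in both statements. It then remains only to verify that the pinching threshold of \thref{th:Io_2} collapses to $\frac{1}{48}$: substituting $n = 4$ and $t = -\frac{1}{3}$ gives $1 + 2t = \frac{1}{3}$ and $n^{2} - n + 4 = 16$, so
\[ \frac{1 + 2t}{n^{2} - n + 4} = \frac{1/3}{16} = \frac{1}{48}. \]
Therefore the assumption $Sec > \frac{1}{48} R$ is precisely the pinching hypothesis of \thref{th:Io_2}, which consequently applies and shows that $g$ is Einstein.

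For the classification I would appeal to the sharp rigidity theorem for Einstein four-manifolds: a closed Einstein $(M^{4}, g)$ with $R \geq 0$ and $Sec \geq \frac{1}{48} R$ is isometric to $\mathbb{S}^{4}$, $\mathbb{RP}^{4}$ or $\mathbb{CP}^{2}$ with their standard metrics, the value $\epsilon = \frac{1}{48}$ being the threshold established in \cite{ribeiro_rigidity_2016}, improving that of \cite{yang_rigidity_2000}. Our strict inequality $Sec > \frac{1}{48} R$ implies the non-strict one, so the hypothesis is met. One must only note that the subcase $R = 0$ does not actually occur under the corollary's assumptions: combining $R = 0$ with the conclusion Einstein forces $Ric = 0$, while $Sec > \frac{1}{48} R = 0$ forces $Ric > 0$ on the closed manifold, a contradiction; hence $R > 0$ and the three positively curved model spaces are the only possibilities.

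I do not expect a substantive obstacle here, since the corollary is a direct instantiation of \thref{th:Io_2} rather than a new argument. The only points demanding care are the scale-invariance reduction that legitimizes passing to unit volume, the exact numerical match of the pinching constant with $\frac{1}{48}$, and the clean import of the optimal Einstein classification of \cite{ribeiro_rigidity_2016} as an external input, together with the short remark ruling out the Ricci-flat case.
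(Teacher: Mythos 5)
Your proposal is correct and takes essentially the same route as the paper: the corollary is obtained by specializing \thref{th:Io_2} to $n = 4$, $t = -\tfrac{1}{3}$ (where Bach-flat metrics are exactly the critical points of $\mathfrak{F}_{-1/3}$ on $\mathcal{M}_{1}(M^{4})$ and the pinching threshold $\tfrac{1+2t}{n^{2}-n+4}$ becomes $\tfrac{1}{48}$), and the isometry statement is then imported from \cite[Theorem 1.3]{ribeiro_rigidity_2016}. Your added details --- the homothety reduction to unit volume and the exclusion of the Ricci-flat case before applying the classification --- are sound and merely make explicit what the paper leaves implicit.
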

	Where the classification results from the application of \cite[Theorem 1.3]{ribeiro_rigidity_2016}, of which we recover the optimal pinching value of $ \epsilon = \frac{1}{48} $ on the sectional curvature.
	\\Whereas, for the case of $ n = 4 $ and $ t \neq - \frac{1}{3} $, we have:
	\begin{corollary}\label{th:Io_5}
		Let $ M^{4} $ closed smooth $ 4 $-manifold, let $ g $ be a critical metric for $ \mathfrak{F}_{t} $ on $ \mathcal{M}_{1}(M^{4}) $ with $ t \neq - \frac{1}{3} $, such that $ R \geq 0 $ and $ Sec > \epsilon(t) R $, where:
		\begin{equation*}
			\epsilon(t) = \begin{cases}
				\frac{1 + 2 t}{4} \quad &t \leq - \frac{1}{2} \\
				\frac{1 + 2 t}{16} \quad &t > - \frac{1}{2} \land t \neq - \frac{1}{3}
			\end{cases}
		\end{equation*}
		Then, $ g $ is Einstein.
	\end{corollary}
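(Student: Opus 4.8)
The plan is to split according to the sign of $t+\tfrac12$ and reduce everything to the two main theorems, the only genuine work occurring in the range where \thref{th:Io_2} is invoked. For $t\le-\tfrac12$ there is nothing beyond bookkeeping: specializing \thref{th:Io_1} to $n=4$ turns its threshold $\tfrac{1+2t}{(n-2)^2}R$ into $\tfrac{1+2t}{4}R=\epsilon(t)R$, exactly the assumed pinching, so $g$ is Einstein. For $t>-\tfrac12$ I would like to apply \thref{th:Io_2} with $n=4$, whose threshold $\tfrac{1+2t}{n^2-n+4}R$ becomes $\tfrac{1+2t}{16}R=\epsilon(t)R$, again matching the hypothesis.

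The obstacle is that \thref{th:Io_2} demands $R=\mathrm{const}$, whereas here only $R\ge0$ is assumed; thus the heart of the argument is to show that in dimension four a critical metric for $\mathfrak{F}_t$ with $t\neq-\tfrac13$ automatically has constant scalar curvature. I would extract this from the Chern-Gauss-Bonnet identity \eqref{eq:ChernGaussBonnet}, which in dimension four allows one to write
\[ \mathfrak{F}_t \;=\; \tfrac12\,\mathfrak{W} \;+\; \left(t+\tfrac13\right)\mathfrak{S}_2 \;-\; 16\pi^2\,\chi(M). \]
Since $\chi(M)$ is topological and $\mathfrak{W}$ is conformally invariant in dimension four, its gradient (a multiple of the Bach tensor) is pointwise trace-free. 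Writing the Euler-Lagrange equation on $\mathcal{M}_1(M^4)$ as $\nabla\mathfrak{F}_t=\mu\,g$ with constant multiplier $\mu$ and taking the trace, only the $\mathfrak{S}_2$-term contributes. Using the standard gradient of $\mathfrak{S}_2$, whose trace is $-2(n-1)\Delta R+\tfrac{n-4}{2}R^2$ and collapses to $-6\,\Delta R$ precisely because $n=4$, the traced equation becomes
\[ -6\left(t+\tfrac13\right)\Delta R \;=\; 4\mu. \]
For $t\neq-\tfrac13$ the coefficient is nonzero, so $\Delta R$ is constant; integrating over the closed manifold forces this constant to vanish, whence $R$ is harmonic and hence constant. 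With $R=\mathrm{const}\ge0$ in hand, \thref{th:Io_2} yields that $g$ is Einstein.

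I expect the constancy of $R$ to be the sole real difficulty, and it is exactly there that $t=-\tfrac13$ must be excluded: at that value the $\mathfrak{S}_2$-term disappears from $\mathfrak{F}_t$, the Euler-Lagrange equation reduces to Bach-flatness, and $R$ need not be constant---which is why \corref{th:Io_3} imposes $R=\mathrm{const}$ by hand. Once the scalar curvature is known to be constant, the corollary follows by a direct appeal to \thref{th:Io_1} and \thref{th:Io_2}.
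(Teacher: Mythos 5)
Your proposal is correct, and its skeleton --- split at $t=-\frac12$, apply \thref{th:Io_1} for $t\le-\frac12$ and \thref{th:Io_2} for $t>-\frac12$, $t\neq-\frac13$, with the only substantive issue being the constancy of $R$ --- is exactly the paper's. The one place you diverge is in how you establish $R=\mathrm{const}$: the paper simply cites its \corref{th:Rconst}, which is read off from the second Euler--Lagrange equation in \eqref{eq:EL_Ft}, namely $(n+4(n-1)t)\Delta R=(n-4)(|Ric|^{2}+tR^{2}-\lambda)$, whose right-hand side vanishes identically at $n=4$, giving $(4+12t)\Delta R=0$. You instead rewrite $\mathfrak{F}_{t}=\frac12\mathfrak{W}+(t+\frac13)\mathfrak{S}_{2}-16\pi^{2}\scalebox{1.3}{$\chi$}(M)$ via Chern--Gauss--Bonnet \eqref{eq:ChernGaussBonnet} and use the trace-freeness of the Bach tensor (equivalently, conformal invariance of $\mathfrak{W}$ in dimension four) to kill the Weyl contribution in the trace of $\nabla\mathfrak{F}_{t}=\mu g$. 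The two computations are equivalent --- your traced equation $-6(t+\frac13)\Delta R=4\mu$ is literally the paper's trace identity specialized to $n=4$ --- so in effect you re-prove \corref{th:Rconst} by making rigorous the paper's own remark that $\mathfrak{F}_{-1/3}$ is variationally equivalent to $\mathfrak{W}$ in dimension four. Your route requires the additional standard input that $\nabla\mathfrak{W}$ is (a multiple of) the Bach tensor and hence trace-free, whereas the paper's is immediate from equations already in hand; in exchange, your derivation explains conceptually why $t=-\frac13$ is the exceptional value. Either way the reduction to the two theorems goes through and the proof is complete.
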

	\begin{proof}
		Since for $ n = 4 $ and $ t \neq - \frac{1}{3} $ critical metrics for $ \mathfrak{F}_{t} $ have constant scalar curvature (see \corref{th:Rconst}), in the $ t \leq - \frac{1}{2} $ range we can apply \thref{th:Io_1} and in the $ t > - \frac{1}{2} \ \land \ t \neq - \frac{1}{3} $ range, being the assumption $ R = const $ automatically satisfied, we can apply \thref{th:Io_2}.
	\end{proof}
	\remark{We notice that, as $ t \to - \infty $, the requirement $ Sec \geq \epsilon R $ is vacuously satisfied and, formally, $ \mathfrak{F}_{t} \to \mathfrak{S}_{2} $. Hence, \corref{th:Io_5} is in agreement with the fact that compact critical metrics for $ \mathfrak{S}_{2} $ on $ \mathcal{M}_{1} (M^{n}) $ with nonnegative scalar curvature are either scalar-flat or Einstein (see \cite[Proposition 5.1]{catino_perspective_2020}).\\ \\}
	\remark{Arguing as in \cite[Theorem 1.2]{catino_rigidity_2015}, it might be possible to include in the above rigidity results also the limit case of $ Sec = \epsilon R $.}
	\
	
	\section{The Euler-Lagrange equations and some Lemmas}
	
	This section is dedicated to the computation of the Euler-Lagrange equations of the $ \mathfrak{F}_{t} $ functional, which are equations satisfied by the corresponding critical metrics.
	\\We fix the notation by recalling that the Riemann curvature (3,1)-tensor $ Riem $ of a Riemannian manifold $ (M^{n},g) $ is defined, as in \cite{lee_introduction_2018}, by:
	\[
	Riem(X,Y)Z = \nabla_{X} \nabla_{Y} Z - \nabla_{Y} \nabla_{X} Z - \nabla_{[X,Y]} Z
	\]
	Which, in local coordinates, reads $ \tensor{R}{^{l}_{i}_{j}_{k}} \frac{\partial}{\partial x^{l}} = Riem(\frac{\partial}{\partial x^{i}},\frac{\partial}{\partial x^{j}}) \frac{\partial}{\partial x^{k}} $. Instead, its (4,0)-version is given by $ R_{lijk} = g_{lp} \tensor{R}{^{p}_{i}_{j}_{k}} $. Where, as for the rest of the entire paper, the Einstein summation convention is adopted.
	\\The Ricci tensor $ Ric $ is given by contraction of the Riemann curvature as $ (Ric)_{ik} = R_{ik} = g^{lj} R_{lijk} $. By contracting once more, we obtain the scalar curvature $ R = g^{ik} R_{ik} $. Finally, the traceless Ricci curvature $ \mathring{Ric} $ is given by $ \mathring{R}_{ik} = R_{ik} - \frac{1}{n} R g_{ik} $.
	\\The decomposition of the Riemann curvature holds:
	\[ Riem = W + \frac{1}{n-2} \mathring{Ric} \circledwedge g + \frac{1}{2n(n-1)}R g \circledwedge g \]
	where $ W $ is the Weyl tensor and $ \circledwedge $ is the Kulkarni-Nomizu product.
	\\In order to compute the Euler-Lagrange equations of the $ \mathfrak{F}_{t} $ functional, we first compute its gradient. Following the computations in \cite[Proposition 4.66]{besse_einstein_1987}, we get that the gradients of the $ \mathfrak{r} $ and the $ \mathfrak{S}_{2} $ functionals are, respectively:
	\begin{align*}
		\nabla \mathfrak{r} &= - \Delta Ric - 2 R_{ikjl} R^{kl} - \frac{1}{2} \Delta R g + \frac{1}{2} |Ric|^{2} g + Hess R \\
		\nabla \mathfrak{S}_{2} &= -2 R_{g} Ric_{g} + \frac{1}{2} R_{g}^{2} g + 2 Hess R_{g} -2 (\Delta R_{g}) g
	\end{align*}
	As a conseguence, the gradient of $ \mathfrak{F}_{t} $ is:
	\[ 	(\nabla \mathfrak{F}_{t})_{ij} = (\nabla \mathfrak{r})_{ij} + t (\nabla \mathfrak{S}_{2})_{ij} = - \Delta R_{ij} + (1 +2t) \nabla^{2}_{i,j} R - \frac{1 + 4t}{2} \Delta R g_{ij} + \frac{1}{2} (|Ric|^{2} + t R^{2}) g_{ij} - 2 R_{ikjl} R_{kl} - 2 t R R_{ij} \]
	Hence, since we are carrying out the optimization on $ \mathcal{M}_{1} (M^{n}) $, a metric $ g $ will be critical for $ \mathfrak{F}_{t} $ if and only if (see \cite{besse_einstein_1987}) $ \nabla \mathfrak{F}_{t} = c g $ for some Lagrange multiplier $ c \in \mathbb{R} $. Then, the trace of such equation reads:
	\[
	\frac{n-4}{2} (|Ric|^{2} + t R^{2}) - \frac{n + 4(n-1)t}{2} \Delta R = n c
	\]
	By putting these two equations together we get:
	\[
	- \Delta R_{ij} - 2 R_{ikjl} R_{kl} + (1+2t) \nabla^{2}_{i,j} R - \frac{2t}{n} \Delta R g_{ij} + \frac{2}{n} (|Ric|^{2} + t R^{2}) g_{ij} - 2t R R_{ij} = 0
	\]
	and:
	\[ (n + 4(n-1)t) \Delta R = (n-4) (|Ric|^{2} + t R^{2} - \lambda) \]
	with $ \lambda = \mathfrak{F}_{t}(g) $.
	Hence, the Euler-Lagrange equations for $ \mathfrak{F}_{t} $ read as:
	\begin{proposition}
		Let $ M^{n} $ be a closed manifold of dimension $ n \geq 3 $. A metric $ g $ is critical for $ \mathfrak{F}_{t} $ on $ \mathcal{M}_{1}(M^{n}) $ if and only if it satisfies the following equations:
		\begin{equation}\label{eq:EL_Ft}
			\begin{cases}
				\Delta \mathring{R}_{ij} = (1 + 2t) \nabla^{2}_{i,j} R - \frac{1 + 2t}{n} \Delta R g_{ij} - 2 R_{ikjl} \mathring{R}_{kl} - \frac{2 + 2 nt}{n} R \mathring{R}_{ij} + \frac{2}{n} |\mathring{Ric}|^{2} g_{ij} \\
				(n + 4(n-1)t) \Delta R = (n-4) (|Ric|^{2} + t R^{2} - \lambda)
			\end{cases}
		\end{equation}
		with $ \lambda = \mathfrak{F}_{t}(g) $.
	\end{proposition}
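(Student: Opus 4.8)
The plan is to read the whole statement as a repackaging of the single tensorial identity $\nabla\mathfrak{F}_{t} = c\,g$ and then to split it into its trace and trace-free parts. Since the optimization is carried out on the unit-volume constraint set $\mathcal{M}_{1}(M^{n})$, the standard theory of constrained critical metrics (see \cite{besse_einstein_1987}) characterizes criticality by the gradient being pointwise proportional to the metric, the gradient of the volume constraint being a multiple of $g$. Thus $g$ is critical for $\mathfrak{F}_{t}$ if and only if $\nabla\mathfrak{F}_{t} = c\,g$ for some Lagrange multiplier $c \in \mathbb{R}$, and I would take as the starting point the explicit expression for $(\nabla\mathfrak{F}_{t})_{ij}$ already assembled above from the gradients of $\mathfrak{r}$ and $\mathfrak{S}_{2}$.

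First I would take the trace of $\nabla\mathfrak{F}_{t} = c\,g$. Contracting with $g^{ij}$ and using $g^{ij}\Delta R_{ij} = \Delta R$, $g^{ij}\nabla^{2}_{i,j}R = \Delta R$ and $g^{ij}R_{ikjl}R_{kl} = |Ric|^{2}$, the curvature terms collapse and one obtains the scalar relation $\frac{n-4}{2}(|Ric|^{2} + tR^{2}) - \frac{n+4(n-1)t}{2}\Delta R = nc$. To pin down the multiplier I would then integrate this identity over the closed manifold: since $\mint{\Delta R} = 0$ and $\vol = 1$, the Laplacian term drops and $c = \frac{n-4}{2n}\lambda$ with $\lambda = \mathfrak{F}_{t}(g)$. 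Substituting this value of $c$ back into the traced relation and clearing denominators yields precisely the second equation $(n + 4(n-1)t)\Delta R = (n-4)(|Ric|^{2} + tR^{2} - \lambda)$.

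For the first equation I would decompose the full tensorial identity into its trace-free part. Writing $R_{ij} = \mathring{R}_{ij} + \frac{R}{n}g_{ij}$, $\Delta R_{ij} = \Delta\mathring{R}_{ij} + \frac{1}{n}(\Delta R)g_{ij}$, $|Ric|^{2} = |\mathring{Ric}|^{2} + \frac{R^{2}}{n}$, and using the contraction $g^{kl}R_{ikjl} = R_{ij}$ to split $R_{ikjl}R_{kl} = R_{ikjl}\mathring{R}_{kl} + \frac{R}{n}R_{ij}$, the two terms $-2R_{ikjl}R_{kl} - 2tRR_{ij}$ produce the curvature term $-2R_{ikjl}\mathring{R}_{kl}$ together with the coefficient $-\frac{2+2nt}{n}R$ in front of $\mathring{R}_{ij}$, exactly as stated. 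It then remains to collect all the pure-$g_{ij}$ contributions; solving $\nabla\mathfrak{F}_{t} = c\,g$ for $\Delta\mathring{R}_{ij}$ and inserting the previously determined $c = \frac{n-4}{2n}\lambda$ together with the scalar relation is what forces these trace terms to collapse to $-\frac{1+2t}{n}(\Delta R)g_{ij} + \frac{2}{n}|\mathring{Ric}|^{2}g_{ij}$.

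The main obstacle is precisely this final bookkeeping. Before substituting the value of $c$, the coefficient of $|\mathring{Ric}|^{2}$ on the right equals $\tfrac{1}{2}$ rather than the desired $\tfrac{2}{n}$, and there is an apparently leftover $R^{2}g_{ij}$ term; it is only after replacing $c$ by $\frac{n-4}{2n}\lambda$ and using the trace relation $(n-4)(|Ric|^{2}+tR^{2}) = (n-4)\lambda + (n+4(n-1)t)\Delta R$ to eliminate $|Ric|^{2}+tR^{2}$ and $\lambda$ that the coefficient of $|\mathring{Ric}|^{2}$ drops from $\tfrac{1}{2}$ to $\tfrac{2}{n}$ and that all $R^{2}g_{ij}$ contributions cancel identically. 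Checking this exact cancellation, which is where the specific numerical coefficients in the statement originate, is the delicate point; everything else is routine. Finally, the converse implication follows by reversing these equivalences, since the two displayed equations together are equivalent to $\nabla\mathfrak{F}_{t} = c\,g$ with $c$ constant, which is exactly the criticality condition.
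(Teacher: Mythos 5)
Your proposal is correct and follows essentially the same route as the paper: criticality on $\mathcal{M}_{1}(M^{n})$ is encoded as $\nabla\mathfrak{F}_{t}=c\,g$, the trace is integrated over the closed unit-volume manifold to identify $c=\frac{n-4}{2n}\lambda$ (yielding the second equation), and the trace-free decomposition $R_{ij}=\mathring{R}_{ij}+\frac{R}{n}g_{ij}$ produces the first equation, with the coefficient $\frac{1}{2}$ of $(|Ric|^{2}+tR^{2})g_{ij}$ dropping to $\frac{2}{n}$ exactly as you describe. The only (immaterial) difference is ordering: the paper forms the trace-free combination before pinning down the multiplier, whereas you determine $c$ first and then let the trace relation absorb the leftover $g_{ij}$-terms, which amounts to the same cancellation.
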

	It directly follows that (see \cite[Corollary 4.67]{besse_einstein_1987}):
	\begin{corollary}
		Any Einstein metric is critical for $ \mathfrak{F}_{t} $ on $ \mathcal{M}_{1}(M^{n}) $.
	\end{corollary}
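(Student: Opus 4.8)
The plan is to verify directly that an Einstein metric satisfies the two Euler--Lagrange equations \eqref{eq:EL_Ft}, since by the preceding Proposition these characterize criticality for $\mathfrak{F}_{t}$ on $\mathcal{M}_{1}(M^{n})$. First I would normalize: any Einstein metric can be rescaled to unit volume, and the Einstein condition $Ric = \lambda g$ with $\lambda \in \mathbb{R}$ is preserved under scaling, so there is no loss of generality in assuming $g \in \mathcal{M}_{1}(M^{n})$. The two pointwise consequences of the Einstein condition that drive the whole argument are that the traceless Ricci tensor vanishes identically, $\mathring{R}_{ij} = 0$, and that the scalar curvature $R = n \lambda$ is a constant (immediate from the definition, since $\lambda$ is a real number); in particular $\nabla^{2}_{i,j} R = 0$, $\Delta R = 0$, $\Delta \mathring{R}_{ij} = 0$, and $|\mathring{Ric}|^{2} = 0$.

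For the first equation of \eqref{eq:EL_Ft}, I would observe that its left-hand side is $\Delta \mathring{R}_{ij} = 0$, while every term on the right-hand side carries a factor of either $\nabla^{2} R$, $\Delta R$, $\mathring{R}_{kl}$, $\mathring{R}_{ij}$, or $|\mathring{Ric}|^{2}$, each of which vanishes for an Einstein metric. Hence the first equation reduces to the trivial identity $0 = 0$.

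For the second equation I would note that the left-hand side $(n + 4(n-1)t) \Delta R$ vanishes because $R$ is constant, so it remains to show the right-hand side $(n-4)(|Ric|^{2} + t R^{2} - \lambda)$ also vanishes, and this is the only step requiring a short computation. Since $|Ric|^{2}$ and $R^{2}$ are constant functions on the unit-volume manifold, we have $\lambda = \mathfrak{F}_{t}(g) = \mint{|Ric|^{2}} + t \mint{R^{2}} = |Ric|^{2} + t R^{2}$, so the factor $|Ric|^{2} + t R^{2} - \lambda$ is identically zero and the second equation holds as well.

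There is no genuine obstacle here: the argument is a direct substitution, and the only point deserving care is the matching of the Lagrange-type constant $\lambda = \mathfrak{F}_{t}(g)$ with the pointwise value $|Ric|^{2} + t R^{2}$, which uses both the constancy of these curvature quantities for Einstein metrics and the unit-volume normalization. Once both equations in \eqref{eq:EL_Ft} are seen to hold, the Proposition yields that $g$ is critical for $\mathfrak{F}_{t}$, completing the proof.
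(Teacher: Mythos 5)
Your proof is correct and follows essentially the same route as the paper, which treats the corollary as an immediate consequence of the Euler--Lagrange system \eqref{eq:EL_Ft} (citing Besse, Corollary 4.67): for an Einstein metric $\mathring{Ric}=0$ and $R$ is constant, so the first equation is trivially $0=0$, and the second reduces to $0 = (n-4)(|Ric|^{2}+tR^{2}-\lambda)$, which holds since the unit-volume normalization gives $\lambda = \mathfrak{F}_{t}(g) = |Ric|^{2}+tR^{2}$. Your explicit matching of the constant $\lambda$ with the pointwise value is precisely the detail the paper leaves implicit.
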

	In case of $ n = 4 $ and $ t \neq - \frac{1}{3} $, we can immediately infer from the second equation of the system \eqref{eq:EL_Ft} the following property:
	\begin{corollary}\label{th:Rconst}
		Let $ M^{4} $ closed 4-manifold. If $ g $ is a critical metric for $ \mathfrak{F}_{t} $ on $ \mathcal{M}_{1}(M^{4}) $ for some $ t \neq - \frac{1}{3} $, then $ g $ has constant scalar curvature.
	\end{corollary}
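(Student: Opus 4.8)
The plan is to read the conclusion directly off the second equation of the Euler--Lagrange system \eqref{eq:EL_Ft}. In general dimension $ n $ this equation reads
\[ (n + 4(n-1)t)\, \Delta R = (n-4)\,(|Ric|^{2} + t R^{2} - \lambda), \]
so the first step is simply to specialize to $ n = 4 $. The entire right-hand side then carries the dimensional factor $ (n-4) $, which vanishes, and the equation collapses to $ (4 + 12t)\, \Delta R = 0 $, independently of any curvature assumption on $ g $.

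The second step, and the only point at which the hypothesis $ t \neq -\tfrac{1}{3} $ enters, is to observe that the coefficient $ 4 + 12t = 4(1 + 3t) $ is nonzero precisely when $ t \neq -\tfrac{1}{3} $. This is exactly the Bach-flat value that must be excluded, since there $ \mathfrak{F}_{t} $ is (variationally) scale-invariant and $ R $ need not be constant. For every other value of $ t $ we may divide through to obtain $ \Delta R = 0 $, so that $ R $ is a harmonic function on the closed manifold $ M^{4} $.

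Finally, integration by parts on the closed manifold gives $ \mint{|\nabla R|^{2}} = - \mint{R\, \Delta R} = 0 $, which forces $ \nabla R \equiv 0 $ and hence $ R $ constant. I do not expect any genuine obstacle here: the statement is an immediate algebraic consequence of the vanishing of the factor $ (n-4) $ at $ n = 4 $, the sole subtlety being the sign condition on $ t $ that prevents the coefficient of $ \Delta R $ from degenerating.
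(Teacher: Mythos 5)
Your proof is correct and is exactly the argument the paper intends: setting $n=4$ in the second Euler--Lagrange equation of \eqref{eq:EL_Ft} kills the right-hand side through the factor $(n-4)$, the coefficient $4+12t$ is nonzero precisely for $t \neq -\tfrac{1}{3}$, and harmonicity of $R$ on a closed manifold forces constancy. The paper states this as an immediate consequence without writing out the details; your write-up simply makes the same steps explicit.
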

	This generally will fail in case $ t = - \frac{1}{3} $, since in dimension four, via Chern-Gauss-Bonnet formula \eqref{eq:ChernGaussBonnet}, $ \mathfrak{F}_{- \frac{1}{3}} $ is variationally equivalent to the Weyl functional $ \mathfrak{W} $, whose critical metrics are Bach-flat and thus with non-constant scalar curvature, in general.
	\\By contracting the first equation of the system \eqref{eq:EL_Ft} with $ \mathring{R}_{ij} $, we obtain the following Weitzenb\"ock formula and its integral version:
	\begin{proposition}\label{th:Cat_5.16}
		Let $ M^{n} $ closed manifold, if $ g $ is critical for $ \mathfrak{F}_{t} $ on $ \mathcal{M}_{1}(M^{n}) $, then:
		\begin{equation*}\label{eq:structure2}
			\frac{1}{2} \Delta |\mathring{Ric}|^{2} = |\nabla \mathring{Ric}|^{2} + (1+2t) \mathring{R}_{ij} \nabla^{2}_{i,j} R - 2 R_{ikjl} \mathring{R}_{ij} \mathring{R}_{kl} - \frac{2 + 2nt}{n} R |\mathring{Ric}|^{2}
		\end{equation*}
		and:
		\[ \mmmint{|\nabla \mathring{Ric}|^{2} - \frac{(n-2)(1 + 2t)}{2n} |\nabla R|^{2}} = 2 \mmmint{R_{ikjl} \mathring{R}_{ij} \mathring{R}_{kl} + \frac{1 + nt}{n} R |\mathring{Ric}|^{2}} \]
	\end{proposition}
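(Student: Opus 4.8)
The plan is to derive the pointwise Weitzenböck formula by contracting the first equation of the Euler–Lagrange system \eqref{eq:EL_Ft} against $\mathring{R}_{ij}$, and then to obtain the integral identity by integrating over the closed manifold $M$. The starting point is the algebraic Bochner-type identity valid for any symmetric $2$-tensor,
\[ \frac{1}{2} \Delta |\mathring{Ric}|^{2} = |\nabla \mathring{Ric}|^{2} + \mathring{R}_{ij} \Delta \mathring{R}_{ij}, \]
which follows immediately from expanding $\Delta(\mathring{R}_{ij} \mathring{R}_{ij})$ with the product rule. It therefore suffices to compute $\mathring{R}_{ij} \Delta \mathring{R}_{ij}$ by multiplying the right-hand side of the first equation in \eqref{eq:EL_Ft} by $\mathring{R}_{ij}$ and summing.

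Carrying out the contraction term by term, the two terms proportional to $g_{ij}$, namely $-\frac{1+2t}{n} \Delta R \, g_{ij}$ and $\frac{2}{n} |\mathring{Ric}|^{2} g_{ij}$, both vanish, since $\mathring{Ric}$ is trace-free and hence $\mathring{R}_{ij} g_{ij} = 0$. The surviving terms give
\[ \mathring{R}_{ij} \Delta \mathring{R}_{ij} = (1+2t) \mathring{R}_{ij} \nabla^{2}_{i,j} R - 2 R_{ikjl} \mathring{R}_{ij} \mathring{R}_{kl} - \frac{2 + 2nt}{n} R |\mathring{Ric}|^{2}, \]
and inserting this into the Bochner identity above yields precisely the stated pointwise Weitzenböck formula.

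For the integral version I would integrate the pointwise formula over $M$. As $M$ is closed, the divergence theorem gives $\int_{M} \Delta |\mathring{Ric}|^{2} \, dV_{g} = 0$, so the left-hand side disappears and everything collapses to a single integral identity. The only remaining term needing manipulation is $\int_{M} (1+2t)\, \mathring{R}_{ij} \nabla^{2}_{i,j} R \, dV_{g}$, which I would treat by integrating by parts once:
\[ \int_{M} \mathring{R}_{ij} \nabla^{2}_{i,j} R \, dV_{g} = - \int_{M} (\nabla_{i} \mathring{R}_{ij}) \nabla_{j} R \, dV_{g}. \]
The contracted second Bianchi identity $\nabla_{i} R_{ij} = \frac{1}{2} \nabla_{j} R$ then gives $\nabla_{i} \mathring{R}_{ij} = \frac{n-2}{2n} \nabla_{j} R$, so this integral equals $-\frac{n-2}{2n} \int_{M} |\nabla R|^{2} \, dV_{g}$. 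Substituting and rearranging produces exactly the claimed integral identity, upon noting that $\frac{2+2nt}{n} = 2 \cdot \frac{1+nt}{n}$.

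I expect no genuine obstacle, since the argument is a chain of standard identities; the only step demanding care is the integration by parts, where one must correctly apply the contracted Bianchi identity to the divergence of $\mathring{Ric}$ and keep track of the resulting factor $\frac{n-2}{2n}$, which is precisely what generates the coefficient $-\frac{(n-2)(1+2t)}{2n}$ of $|\nabla R|^{2}$ in the final statement.
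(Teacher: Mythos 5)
Your proposal is correct and follows exactly the route the paper intends: the paper derives this proposition precisely by contracting the first Euler--Lagrange equation of \eqref{eq:EL_Ft} with $\mathring{R}_{ij}$, combining with the Bochner-type identity $\frac{1}{2}\Delta|\mathring{Ric}|^{2} = |\nabla\mathring{Ric}|^{2} + \mathring{R}_{ij}\Delta\mathring{R}_{ij}$, and then integrating over the closed manifold with one integration by parts and the contracted Bianchi identity $\nabla_{i}\mathring{R}_{ij} = \frac{n-2}{2n}\nabla_{j}R$. All your intermediate computations (vanishing of the trace terms, the sign of the integration by parts, and the coefficient $-\frac{(n-2)(1+2t)}{2n}$) check out.
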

	Another key tool in the proof of the main result is the following pointwise estimate available for the metrics that satisfy $ Sec \geq \epsilon R $ for some $ \epsilon \in \mathbb{R} $.
	\begin{proposition}{(\cite[Proposition 2.1]{catino_rigidity_2019})}\label{th:sectional}
		Let $ (M^{n},g) $ Riemannian manifold with $ n \geq 3 $, if $ Sec \geq \epsilon R $ for some $ \epsilon \in \mathbb{R} $, then:
		\begin{gather*}
			R_{ijkl} \mathring{R}_{ik} \mathring{R}_{jl} \leq \frac{1 - n^{2} \epsilon}{n} R |\mathring{Ric}|^{2} + \mathring{R}_{ij} \mathring{R}_{ik} \mathring{R}_{jk} \\
			R_{ijkl} \mathring{R}_{ik} \mathring{R}_{jl} \leq \frac{n^{2} - 4 n + 2 - n^{2} (n-2)(n-3) \epsilon}{2 n} R |\mathring{Ric}|^{2} - (n-1) \mathring{R}_{ij} \mathring{R}_{ik} \mathring{R}_{jk}
		\end{gather*}
	\end{proposition}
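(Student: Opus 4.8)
The plan is to prove both inequalities pointwise and purely algebraically, reducing the tensorial contraction to a statement about the eigenvalues of $\mathring{Ric}$ and the sectional curvatures of the coordinate planes of a well-chosen frame. Fix a point $p\in M$ and an orthonormal frame $\{e_i\}$ diagonalizing the traceless Ricci tensor, so that $\mathring{R}_{ij}=\lambda_i\delta_{ij}$ with $\sum_i\lambda_i=0$. Writing $K_{ij}$ for the sectional curvature of the plane $\mathrm{span}(e_i,e_j)$, a direct contraction gives $R_{ijkl}\mathring{R}_{ik}\mathring{R}_{jl}=\sum_{i\neq j}K_{ij}\lambda_i\lambda_j$. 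I would record at the outset the elementary identities used throughout: since $\mathring{Ric}$ is traceless, $\sum_{i\neq j}\lambda_i\lambda_j=(\sum_i\lambda_i)^2-\sum_i\lambda_i^2=-|\mathring{Ric}|^2$ and $\mathring{R}_{ij}\mathring{R}_{ik}\mathring{R}_{jk}=\sum_i\lambda_i^3$; and, because $Ric=\mathring{Ric}+\frac{R}{n}g$ is diagonal in the same frame, $\sum_{j\neq i}K_{ij}=R_{ii}=\lambda_i+\frac{R}{n}$ and $\sum_{i\neq j}K_{ij}=R$. The pinching hypothesis enters only as the pointwise bound $K_{ij}\geq\epsilon R$ for every pair $i\neq j$, together with its summed consequences.

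For the first inequality I would use the hypothesis in its crudest form. Set $a_{ij}:=K_{ij}-\epsilon R\geq 0$ and split $\sum_{i\neq j}K_{ij}\lambda_i\lambda_j=\sum_{i\neq j}a_{ij}\lambda_i\lambda_j+\epsilon R\sum_{i\neq j}\lambda_i\lambda_j$. The second summand is exactly $-\epsilon R|\mathring{Ric}|^2$. For the first, the key step is the elementary bound $\lambda_i\lambda_j\leq\frac12(\lambda_i^2+\lambda_j^2)$, licit to insert because $a_{ij}\geq0$; after symmetrizing it yields $\sum_{i\neq j}a_{ij}\lambda_i\lambda_j\leq\sum_{i\neq j}a_{ij}\lambda_i^2=\sum_i\lambda_i^2\big(\sum_{j\neq i}a_{ij}\big)$. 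The inner sum is $\sum_{j\neq i}K_{ij}-(n-1)\epsilon R=\lambda_i+\frac{R}{n}-(n-1)\epsilon R$, so the whole expression collapses to $\sum_i\lambda_i^3+\big(\frac{R}{n}-(n-1)\epsilon R\big)|\mathring{Ric}|^2$. Adding back $-\epsilon R|\mathring{Ric}|^2$ gives precisely $\mathring{R}_{ij}\mathring{R}_{ik}\mathring{R}_{jk}+\frac{1-n^2\epsilon}{n}R|\mathring{Ric}|^2$, the first claimed bound.

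For the second inequality the same machinery is run, but fed with a sharper per-plane lower bound on $K_{ij}$. I would extract it by decomposing $R=\sum_{k\neq l}K_{kl}$ according to how the ordered pair $(k,l)$ meets the fixed pair $\{i,j\}$: the pairs lying inside $\{i,j\}$ contribute $2K_{ij}$, the pairs meeting $\{i,j\}$ in exactly one index contribute $2(R_{ii}+R_{jj}-2K_{ij})$, and the remaining $(n-2)(n-3)$ ordered pairs disjoint from $\{i,j\}$ contribute a quantity that is $\geq(n-2)(n-3)\epsilon R$. Solving for $K_{ij}$ yields the refined lower bound
\[ K_{ij}\geq (R_{ii}+R_{jj})-\tfrac12 R+\tfrac12(n-2)(n-3)\epsilon R. \]
Writing $K_{ij}=L_{ij}+a_{ij}$ with $L_{ij}$ this lower bound and $a_{ij}\geq0$, and applying $\lambda_i\lambda_j\leq\frac12(\lambda_i^2+\lambda_j^2)$ to the $a_{ij}$-part exactly as before, both $\sum_{i\neq j}L_{ij}\lambda_i\lambda_j$ and $\sum_i\lambda_i^2\big(\sum_{j\neq i}a_{ij}\big)$ are evaluated through the same trace identities; the linear-in-$\lambda$ piece $\lambda_i+\lambda_j$ inside $L_{ij}$ is what upgrades the cubic coefficient from $+1$ to $-(n-1)$, and bookkeeping the constant pieces reproduces the coefficient $\frac{n^2-4n+2-n^2(n-2)(n-3)\epsilon}{2n}$.

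I expect the main obstacle to be computational rather than conceptual: correctly deriving and signing the refined lower bound on $K_{ij}$ (in particular counting the $(n-2)(n-3)$ disjoint ordered pairs and tracking the cancellation $2K_{ij}-4K_{ij}$), and then faithfully evaluating the cubic symmetric sums $\sum_i\lambda_i^3$ produced by the linear term of $L_{ij}$. A secondary point requiring care is the consistency of the curvature sign convention, i.e. checking that $R_{ijkl}\mathring{R}_{ik}\mathring{R}_{jl}=+\sum_{i\neq j}K_{ij}\lambda_i\lambda_j$ under the paper's conventions, since an overall sign error there would flip both inequalities.
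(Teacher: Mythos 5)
Your proposal is correct and is essentially the same argument as the paper's source for this statement: the paper quotes the proposition without proof from \cite[Proposition 2.1]{catino_rigidity_2019}, and the proof there is exactly your scheme --- diagonalize $\mathring{Ric}$ in an orthonormal frame, use the crude bound $K_{ij}\geq\epsilon R$ together with $\lambda_i\lambda_j\leq\frac{1}{2}(\lambda_i^2+\lambda_j^2)$ (licit since the coefficients $K_{ij}-\epsilon R$ are nonnegative) for the first inequality, and the refined lower bound $K_{ij}\geq R_{ii}+R_{jj}-\frac{1}{2}R+\frac{1}{2}(n-2)(n-3)\epsilon R$, obtained by splitting $R=\sum_{k\neq l}K_{kl}$ over pairs meeting $\{i,j\}$, for the second. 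Your coefficient bookkeeping checks out (the cubic coefficient $-2-(n-3)=-(n-1)$ and the scalar coefficient $\frac{-4+n+(n-2)(n-3)}{2n}=\frac{n^2-4n+2}{2n}$), and your sign-convention caveat resolves favorably: under the Besse-style convention the paper actually uses in its Euler--Lagrange equations, $R_{ijkl}\mathring{R}_{ik}\mathring{R}_{jl}=\sum_{i\neq j}K_{ij}\lambda_i\lambda_j$ holds with the sign you need.
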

	By taking the convex combination of such inequalities, we get the following.
	\begin{corollary}\label{th:Cat_5.18}
		Let $ (M^{n},g) $ Riemannian manifold, with $ n \geq 3 $. If $ Sec \geq \epsilon R $ for some $ \epsilon \in \mathbb{R} $, then for any $ \ s \in [0,1] $:
		\begin{multline}\label{eq:sec}
			R_{ijkl} \mathring{R}_{ik} \mathring{R}_{jl} \leq \left( \frac{n^{2} - 4n + 2 - n^{2} (n-2) (n-3) \epsilon}{2n} - \frac{n-4}{2} (1 - n(n-1) \epsilon) s \right) R |\mathring{Ric}|^{2} \\
			- (n - 1 - ns) \mathring{R}_{ij} \mathring{R}_{ik} \mathring{R}_{jk} \
		\end{multline}
	\end{corollary}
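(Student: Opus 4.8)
The plan is to form a convex combination of the two pointwise inequalities supplied by \propref{th:sectional}. Both inequalities share the same left-hand side $R_{ijkl} \mathring{R}_{ik} \mathring{R}_{jl}$; calling the first inequality (A) and the second (B), for a parameter $s \in [0,1]$ I would multiply (A) by $s$, multiply (B) by $1-s$, and add. Since the two weights are nonnegative and sum to one, the left-hand side is reproduced unchanged, and the result takes the form $R_{ijkl} \mathring{R}_{ik} \mathring{R}_{jl} \leq A(s)\, R |\mathring{Ric}|^{2} + B(s)\, \mathring{R}_{ij}\mathring{R}_{ik}\mathring{R}_{jk}$, where $A(s)$ and $B(s)$ are the corresponding convex combinations of the coefficients appearing in (A) and (B).

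The coefficient $B(s)$ of the cubic term is immediate: $s \cdot 1 + (1-s)\big(-(n-1)\big) = -(n-1-ns)$, which is exactly the coefficient claimed in \eqref{eq:sec}. For the coefficient $A(s)$ of $R|\mathring{Ric}|^{2}$, I would write it as its value at $s=0$ plus a term linear in $s$. The value at $s=0$ is the coefficient in (B), namely $\frac{n^{2}-4n+2-n^{2}(n-2)(n-3)\epsilon}{2n}$, so it remains to identify the slope $\frac{1-n^{2}\epsilon}{n} - \frac{n^{2}-4n+2-n^{2}(n-2)(n-3)\epsilon}{2n}$.

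The one genuinely nontrivial step is simplifying this slope. Placing it over the common denominator $2n$ and separating the $\epsilon$-dependent part from the rest, the key observation is the factorization $(n-2)(n-3)-2 = n^{2}-5n+4 = (n-1)(n-4)$, while the $\epsilon$-free part collects to $-n(n-4)$. Factoring out the common $(n-4)$ then produces a slope equal to $-\frac{n-4}{2}\big(1-n(n-1)\epsilon\big)$, so that $A(s)$ coincides with the coefficient displayed in \eqref{eq:sec}. This yields the stated estimate; no curvature identities beyond the two inequalities of \propref{th:sectional} are required, and the convexity $s\in[0,1]$ is used only to guarantee that adding the weighted inequalities preserves the direction of the inequality.
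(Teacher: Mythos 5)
Your proposal is correct and is exactly the argument the paper intends: the corollary is obtained by taking the convex combination of the two inequalities of \propref{th:sectional} with weights $s$ and $1-s$, and your simplification of the slope via $(n-2)(n-3)-2=(n-1)(n-4)$ verifies the stated coefficient. The paper states this step without detail, so your computation simply fills in the algebra it leaves implicit.
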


	\section{Proof of \thref{th:Io_1}, \thref{th:Io_2}}
	
	Let $ M^{n} $ closed smooth n-manifold with $ n \geq 3 $, let $ g $ be a critical metric for $ \mathfrak{F}_{t} $ on $ \mathcal{M}_{1}(M^{n}) $ for a generic $ t \in \mathbb{R} $, then, by \propref{th:Cat_5.16}:
	\begin{equation}\label{eq:proofGenEL}
		\mmmint{|\nabla \mathring{Ric}|^{2} - \frac{(n-2)(1 + 2t)}{2n} |\nabla R|^{2}} = 2 \mmmint{R_{ikjl} \mathring{R}_{ij} \mathring{R}_{kl} + \frac{1 + nt}{n} R |\mathring{Ric}|^{2}}
	\end{equation}
	Inspired by the proof of \cite[Theorem 1.1]{catino_rigidity_2019}, given $ a_{1} $, $ a_{2} $, $ b_{1} $, $ b_{2} $, $ b_{3} \in \mathbb{R} $ we introduce the 3-tensor $ F $ as follows:
	\[ F_{ijk} = \nabla_{k} \mathring{R}_{ij} + a_{1} \nabla_{j} \mathring{R}_{ik} + a_{2} \nabla_{i} \mathring{R}_{jk} + b_{1} \nabla_{k} R g_{ij} + b_{2} \nabla_{j} R g_{ik} + b_{3} \nabla_{i} R g_{jk} \]
	Its modulus, due to the contracted Bianchi identity $ div \mathring{Ric} = \frac{n-2}{2n} \nabla R $, reads:
	\begin{equation}\label{eq:proofGenF}
		|F|^{2} = (1 + a_{1}^{2} + a_{2}^{2}) |\nabla \mathring{Ric}|^{2} + 2 (a_{1} + a_{2} + a_{1} a_{2}) \nabla_{k} \mathring{R}_{ij} \nabla_{j} \mathring{R}_{ik} + Q_{0} |\nabla R|^{2}
	\end{equation}
	with:
	\[ Q_{0} := \frac{n-2}{n} [a_{1} (b_{1} + b_{3}) + a_{2} (b_{1} + b_{2}) + b_{2} + b_{3}] + n (b_{1}^{2} + b_{2}^{2} + b_{3}^{2}) + 2 (b_{1} b_{2} + b_{1} b_{3} + b_{2} b_{3}) \]
	By integrating \eqref{eq:proofGenF} over $ M $ and by isolating $ |\nabla \mathring{Ric}|^{2} $, we get:
	\begin{multline*}
		\mint{|\nabla \mathring{Ric}|^{2}} = \frac{1}{1 + a_{1}^{2} + a_{2}^{2}} \mint{|F|^{2}} - \frac{2 (a_{1} + a_{2} + a_{1} a_{2})}{1 + a_{1}^{2} + a_{2}^{2}} \mint{\nabla_{k} \mathring{R}_{ij} \nabla_{j} \mathring{R}_{ik}} \\ - \frac{Q_{0}}{1 + a_{1}^{2} + a_{2}^{2}} \mint{|\nabla R|^{2}}
	\end{multline*}
	We manipulate the second term on the right-hand side via integration by parts and via the covariant derivative commutation formula for $ \mathring{Ric} $ (again using the contracted Bianchi identity):
	\begin{align*}
		\mint{\nabla_{k} \mathring{R}_{ij} \nabla_{j} \mathring{R}_{ik}} &= - \mint{\mathring{R}_{ij} \nabla_{k} \nabla_{j} \mathring{R}_{ik}} \\
		&= - \mint{\mathring{R}_{ij} (\nabla_{j} \nabla_{k} \mathring{R}_{ik} + R_{kjil} \mathring{R}_{kl} + R_{jl} \mathring{R}_{il})} \\
		&= - \mmmint{\frac{n-2}{2n} \mathring{R}_{ij} \nabla_{j} \nabla_{i} R + R_{kjil} \mathring{R}_{ij} \mathring{R}_{kl} + \mathring{R}_{ij} \mathring{R}_{jl} \mathring{R}_{il} + \frac{R}{n} \mathring{R}_{ij} g_{jl} \mathring{R}_{il}} \\
		&= \mmmint{\left( \frac{n-2}{2n} \right)^{2} |\nabla R|^{2} + R_{jkil} \mathring{R}_{ij} \mathring{R}_{kl} - \mathring{R}_{ij} \mathring{R}_{il} \mathring{R}_{jl} - \frac{1}{n} R |\mathring{Ric}|^{2}}
	\end{align*}
	Thus, overall:
	\begin{multline*}
		\mint{|\nabla \mathring{Ric}|^{2}} = \frac{1}{1 + a_{1}^{2} + a_{2}^{2}} \mint{|F|^{2}} - Q_{1} \mint{|\nabla R|^{2}} \\
		- \frac{2(a_{1} + a_{2} + a_{1} a_{2})}{1 + a_{1}^{2} + a_{2}^{2}} \mint{(R_{jkil} \mathring{R}_{ij} \mathring{R}_{kl} - \mathring{R}_{ij} \mathring{R}_{il} \mathring{R}_{jl})} + \frac{2(a_{1} + a_{2} + a_{1} a_{2})}{n(1 + a_{1}^{2} + a_{2}^{2})} \mint{R |\mathring{Ric}|^{2}}
	\end{multline*}
	with:
	\[ Q_{1} := \frac{Q_{0}}{1 + a_{1}^{2} + a_{2}^{2}} + \left(\frac{n-2}{2n}\right)^{2} \frac{2 (a_{1} + a_{2} + a_{1} a_{2})}{1 + a_{1}^{2} + a_{2}^{2}} \]
	By pluggin this back into \eqref{eq:proofGenEL}, we get:
	\begin{align*}
		\frac{1}{1 + a_{1}^{2} + a_{2}^{2}} \mint{|F|^{2}} &- \left[Q_{1} + \frac{(n-2)(1+2t)}{2n}\right] \mint{|\nabla R|^{2}} \\
		&+ \frac{2(a_{1} + a_{2} + a_{1} a_{2})}{1 + a_{1}^{2} + a_{2}^{2}} \mint{\mathring{R}_{ij} \mathring{R}_{il} \mathring{R}_{jl}} \\
		&+ \left[\frac{2(a_{1} + a_{2} + a_{1} a_{2})}{n(1 + a_{1}^{2} + a_{2}^{2})} - \frac{2(1+nt)}{n}\right] \mint{R |\mathring{Ric}|^{2}} \\ 
		&- \frac{2(a_{1} + a_{2} + a_{1} a_{2})}{1 + a_{1}^{2} + a_{2}^{2}} \mint{R_{jkil} \mathring{R}_{ij} \mathring{R}_{kl}} - 2 \mint{R_{ikjl} \mathring{R}_{ij} \mathring{R}_{kl}} = 0
	\end{align*}
	Since:
	\[ R_{jkil} \mathring{R}_{ij} \mathring{R}_{kl} = R_{ikjl} \mathring{R}_{ji} \mathring{R}_{kl} = R_{ikjl} \mathring{R}_{ij} \mathring{R}_{kl} \]
	We get:
	\begin{align*}
		\frac{1}{1 + a_{1}^{2} + a_{2}^{2}} \mint{|F|^{2}} &- \left[Q_{1} + \frac{(n-2)(1+2t)}{2n}\right] \mint{|\nabla R|^{2}} \\
		&+ \frac{2(a_{1} + a_{2} + a_{1} a_{2})}{1 + a_{1}^{2} + a_{2}^{2}} \mint{\mathring{R}_{ij} \mathring{R}_{il} \mathring{R}_{jl}} \\
		&+ \left[\frac{2(a_{1} + a_{2} + a_{1} a_{2})}{n(1 + a_{1}^{2} + a_{2}^{2})} - \frac{2(1+nt)}{n}\right] \mint{R |\mathring{Ric}|^{2}} \\ 
		&- \left[\frac{2(a_{1} + a_{2} + a_{1} a_{2})}{1 + a_{1}^{2} + a_{2}^{2}} + 2\right] \mint{R_{ikjl} \mathring{R}_{ij} \mathring{R}_{kl}} = 0
	\end{align*}
	By assuming $ Sec \geq \epsilon R $, for some $ \epsilon \in \mathbb{R} $, by \eqref{eq:sec} we have:
	\begin{multline*}
		R_{ijkl} \mathring{R}_{ik} \mathring{R}_{jl} \leq \left( \frac{n^{2} - 4n + 2 - n^{2} (n-2) (n-3) \epsilon}{2n} - \frac{n-4}{2} (1 - n(n-1) \epsilon) s \right) R |\mathring{Ric}|^{2} \\ - (n - 1 - ns) \mathring{R}_{ij} \mathring{R}_{il} \mathring{R}_{jl}
	\end{multline*}
	\footnotetext{($ \dagger $) $ \frac{2(a_{1} + a_{2} + a_{1} a_{2})}{1 + a_{1}^{2} + a_{2}^{2}} + 2 \geq 0 \iff \frac{2(a_{1} + a_{2} + a_{1} a_{2} + 1 + a_{1}^{2} + a_{2}^{2})}{1 + a_{1}^{2} + a_{2}^{2}} \geq 0 \iff f(a_{1},a_{2}) := a_{1} + a_{2} + a_{1} a_{2} + 1 + a_{1}^{2} + a_{2}^{2} \geq 0 $, which holds, indeed:
		$ (\partial_{a_{1}} f = 1 + a_{2} + 2 a_{1} = 0 \ \land \ \partial_{a_{2}} f = 1 + a_{1} + 2 a_{2} = 0) \iff a_{1} = a_{2} = - \frac{1}{3} $ and: $ (\partial_{a_{1}a_{1}}^{2} f = \partial_{a_{2}a_{2}}^{2} f = 2 \ \land \ \partial_{a_{1}a_{2}}^{2} f = 1) \implies (\partial_{a_{1}a_{1}}^{2} f) (\partial_{a_{2}a_{2}}^{2} f) - (\partial_{a_{1}a_{2}}^{2} f)^{2} = 3 > 0 $, thus $ (-\frac{1}{3},-\frac{1}{3}) $ is the global minimum of $ f $ and $ f(-\frac{1}{3},-\frac{1}{3}) = \frac{2}{3} > 0 $. Thus we can use \eqref{eq:proofGenSec} with the correct sign without assumptions on $ a_{1} $ and $ a_{2} $.}
	\begin{multline}\label{eq:proofGenSec}
		\iff - R_{ijkl} \mathring{R}_{ik} \mathring{R}_{jl} \geq - \left( \frac{n^{2} - 4n + 2 - n^{2} (n-2) (n-3) \epsilon}{2n} - \frac{n-4}{2} (1 - n(n-1) \epsilon) s \right) R |\mathring{Ric}|^{2} \\ + (n - 1 - ns) \mathring{R}_{ij} \mathring{R}_{il} \mathring{R}_{jl} \
	\end{multline}
	By using it, since $ R_{ikjl} \mathring{R}_{ij} \mathring{R}_{kl} = R_{ijkl} \mathring{R}_{ik} \mathring{R}_{jl} $, we obtain:
	\begin{align*}
		0 &\overset{(\dagger)}{\geq} \frac{1}{1 + a_{1}^{2} + a_{2}^{2}} \mint{|F|^{2}}
		\begin{aligned}[t]
			&- \left[Q_{1} + \frac{(n-2)(1+2t)}{2n}\right] \mint{|\nabla R|^{2}} \\
			&+ \frac{2(a_{1} + a_{2} + a_{1} a_{2})}{1 + a_{1}^{2} + a_{2}^{2}} \mint{\mathring{R}_{ij} \mathring{R}_{il} \mathring{R}_{jl}} \\
			&+ \left[\frac{2(a_{1} + a_{2} + a_{1} a_{2})}{n(1 + a_{1}^{2} + a_{2}^{2})} - \frac{2(1+nt)}{n}\right] \mint{R |\mathring{Ric}|^{2}}
		\end{aligned} \\
		&- \left[\frac{2(a_{1} + a_{2} + a_{1} a_{2})}{1 + a_{1}^{2} + a_{2}^{2}} + 2\right] \left[ \frac{n^{2} - 4n + 2 - n^{2} (n-2) (n-3) \epsilon}{2n} - \frac{n-4}{2} (1 - n(n-1) \epsilon) s \right] \mint{R |\mathring{Ric}|^{2}} \\
		&+ \left[\frac{2(a_{1} + a_{2} + a_{1} a_{2})}{1 + a_{1}^{2} + a_{2}^{2}} + 2\right] (n - 1 - ns) \mint{\mathring{R}_{ij} \mathring{R}_{il} \mathring{R}_{jl}}
	\end{align*}\
	We define the following coefficients:
	\[ Q_{2} := Q_{1} + \frac{(n-2)(1+2t)}{2n} \]\
	\begin{align*}
		Q_{RRc} := &\left[\frac{2(a_{1} + a_{2} + a_{1} a_{2})}{n(1 + a_{1}^{2} + a_{2}^{2})} - \frac{2(1+nt)}{n}\right] \\
		- &\left[\frac{2(a_{1} + a_{2} + a_{1} a_{2})}{1 + a_{1}^{2} + a_{2}^{2}} + 2\right] \left[ \frac{n^{2} - 4n + 2 - n^{2} (n-2) (n-3) \epsilon}{2n} - \frac{n-4}{2} (1 - n(n-1) \epsilon) s \right]
	\end{align*} \\
	\begin{align*}
		Q_{3Rc} := &\frac{2(a_{1} + a_{2} + a_{1} a_{2})}{1 + a_{1}^{2} + a_{2}^{2}} + \left[\frac{2(a_{1} + a_{2} + a_{1} a_{2})}{1 + a_{1}^{2} + a_{2}^{2}} + 2\right] (n - 1 - ns) \\
		=&\frac{2(a_{1} + a_{2} + a_{1} a_{2})}{1 + a_{1}^{2} + a_{2}^{2}} (1 + n - 1 - ns) + 2 (n - 1 - ns) \\
		=&\frac{2(a_{1} + a_{2} + a_{1} a_{2})}{1 + a_{1}^{2} + a_{2}^{2}} n (1 - s) + 2 (n - 1 - ns)
	\end{align*}
	Thus:
	\[ 0 \geq \frac{1}{1 + a_{1}^{2} + a_{2}^{2}} \mint{|F|^{2}} - Q_{2} \mint{|\nabla R|^{2}} + Q_{RRc} \mint{R |\mathring{Ric}|^{2}} + Q_{3Rc} \mint{\mathring{R}_{ij} \mathring{R}_{il} \mathring{R}_{jl}} \]
	Now we look for $ s \in \mathbb{R} $ such that $ Q_{3Rc} = 0 $ and we verify that $ s \in [0,1] $:
	\begin{align}
		Q_{3Rc} = 0 &\iff \frac{2(a_{1} + a_{2} + a_{1} a_{2})}{1 + a_{1}^{2} + a_{2}^{2}} n (1 - s) + 2 (n - 1 - ns) = 0 \nonumber \\
		&\iff 2 n s \left(1 + \frac{a_{1} + a_{2} + a_{1} a_{2}}{1 + a_{1}^{2} + a_{2}^{2}}\right) = \frac{2(a_{1} + a_{2} + a_{1} a_{2})}{1 + a_{1}^{2} + a_{2}^{2}} n + 2 (n - 1) \nonumber \\
		&\iff 2 n s \left(1 + \frac{a_{1} + a_{2} + a_{1} a_{2}}{1 + a_{1}^{2} + a_{2}^{2}}\right) = 2 n \left(1 + \frac{a_{1} + a_{2} + a_{1} a_{2}}{1 + a_{1}^{2} + a_{2}^{2}}\right) - 2 \nonumber \\
		&\iff n s \left(1 + \frac{a_{1} + a_{2} + a_{1} a_{2}}{1 + a_{1}^{2} + a_{2}^{2}}\right) = n \left(1 + \frac{a_{1} + a_{2} + a_{1} a_{2}}{1 + a_{1}^{2} + a_{2}^{2}}\right) - 1 \nonumber \\
		&\iff s = \underbrace{\frac{n \left(1 + \frac{a_{1} + a_{2} + a_{1} a_{2}}{1 + a_{1}^{2} + a_{2}^{2}}\right) - 1}{n \left(1 + \frac{a_{1} + a_{2} + a_{1} a_{2}}{1 + a_{1}^{2} + a_{2}^{2}}\right)}}_{\implies s < 1} \label{eq:proofGenS}
	\end{align}
	Hence:
	\begin{align*}
		s \geq 0 &\overset{f \geq 0}{\iff} n \left(1 + \frac{a_{1} + a_{2} + a_{1} a_{2}}{1 + a_{1}^{2} + a_{2}^{2}}\right) - 1 \geq 0 \\
		&\iff 1 + \frac{a_{1} + a_{2} + a_{1} a_{2}}{1 + a_{1}^{2} + a_{2}^{2}} - \frac{1}{n} \geq 0 \\
		&\iff \frac{n-1}{n} + \frac{a_{1} + a_{2} + a_{1} a_{2}}{1 + a_{1}^{2} + a_{2}^{2}} \geq 0 \\
		&\iff \frac{(n-1) (1 + a_{1}^{2} + a_{2}^{2}) + n (a_{1} + a_{2} + a_{1} a_{2})}{n (1 + a_{1}^{2} + a_{2}^{2})} \geq 0 \\
		&\iff (n-1) (1 + a_{1}^{2} + a_{2}^{2}) + n (a_{1} + a_{2} + a_{1} a_{2}) \geq 0 \\
		&\iff g(a_{1},a_{2}) := (n-1) f(a_{1},a_{2}) + a_{1} + a_{2} + a_{1} a_{2} \geq 0 
	\end{align*}
	Which is true, indeed:
	\begin{equation*}
		\begin{cases}  
			\partial_{a_{1}} g = (n-1) \partial_{a_{1}} f + 1 + a_{2} = (n-1) + (n-1) a_{2} + 2 (n-1) a_{1} + 1 + a_{2} = 0 \\
			\partial_{a_{2}} g = (n-1) \partial_{a_{2}} f + 1 + a_{1} = (n-1) + (n-1) a_{1} + 2 (n-1) a_{2} + 1 + a_{1} = 0
		\end{cases}
	\end{equation*}
	\[ \iff a_{1} = a_{2} = - \frac{n}{3n-2} \]
	and, since:
	\begin{equation*}
		\begin{cases}
			\partial_{a_{1}a_{1}}^{2} g = (n-1) \partial_{a_{1}a_{1}}^{2} f = 2 (n-1) \\
			\partial_{a_{2}a_{2}}^{2} g = (n-1) \partial_{a_{2}a_{2}}^{2} f = 2 (n-1) \\
			\partial_{a_{1}a_{2}}^{2} g = (n-1) \partial_{a_{1}a_{2}}^{2} f + 1 = n
		\end{cases}
	\end{equation*}
	we get:
	\[ (\partial_{a_{1}a_{1}}^{2} g) (\partial_{a_{2}a_{2}}^{2} g) - (\partial_{a_{1}a_{2}}^{2} g)^{2} = 4 (n-1)^{2} - n^{2} = 3 n^{2} - 8 n + 4 > 0 \]
	i.e. $ (-\frac{n}{3n-2},-\frac{n}{3n-2}) $ is the global minimum of $ g $ and, since:
	\begin{align*}
		g\left(-\frac{n}{3n-2}\, ,-\frac{n}{3n-2}\right) &= (n-1) f\left(-\frac{n}{3n-2}\, ,-\frac{n}{3n-2}\right) - \frac{2n}{3n-2} + \frac{n^{2}}{(3n-2)^{2}} \\
		&= (n-1) \left[ 1 - \frac{2n}{3n-2} + \frac{3n^{2}}{(3n-2)^{2}} \right] - \frac{2n}{3n-2} + \frac{n^{2}}{(3n-2)^{2}} \\
		&= (n-1) - (n-1) \frac{2n}{3n-2} + (n-1) \frac{3n^{2}}{(3n-2)^{2}} - \frac{2n}{3n-2} + \frac{n^{2}}{(3n-2)^{2}} \\
		&=\frac{2 n^{2} - 5 n + 2}{3 n - 2} \overset{n \geq 3}{>} 0
	\end{align*}
	we get $ s > 0 $; i.e. $ s \in (0,1) $.
	\\By substituting \eqref{eq:proofGenS} into $ Q_{RRc} $, and by collecting $ \epsilon $, we get:
	\begin{equation*}
		Q_{RRc} = - 1 - 2 t + \left( \frac{2 n (a_{1} + a_{2} + a_{1} a_{2})}{1 + a_{1}^{2} + a_{2}^{2}} + 4 + n (n-3) \right) \epsilon
	\end{equation*}
	To procede with the argument, either we assume $ R = const $ or we force $ Q_{2} = 0 $. For the moment being we procede with the former, obtaining:
	\[ 0 \geq \frac{1}{1 + a_{1}^{2} + a_{2}^{2}} \mint{|F|^{2}} + Q_{RRc} \mint{R |\mathring{Ric}|^{2}} \]
	Since $ R \geq 0 $ by assumption, we ask for:
	\[ Q_{RRc} \geq 0 \overset{(\dagger)}{\iff} \epsilon \geq \underbrace{\frac{1+2t}{\frac{2 n (a_{1} + a_{2} + a_{1} a_{2})}{1 + a_{1}^{2} + a_{2}^{2}} + 4 + n (n-3)}}_{=: \, \texttt{eps}} \]
	\footnotetext{($ \dagger $) Since $ \frac{2 n (a_{1} + a_{2} + a_{1} a_{2})}{1 + a_{1}^{2} + a_{2}^{2}} + 4 + n (n-3) > 0 \quad \forall a_{1},a_{2} \in \mathbb{R}, n \geq 3 $ \\}
	Through \texttt{Mathematica} (see \hyperref[ch:Code]{ Appendix A} for the code), we minimize \texttt{eps[a1,a2]} with respect to our degrees of freedom $ a_{1} $ and $ a_{2} $, obtaining:
	\begin{equation*}
		\epsilon \geq \begin{cases}
			\frac{1 + 2 t}{(n-2)^{2}} \quad &t < - \frac{1}{2} \\
			0 \quad &t = - \frac{1}{2} \\
			\frac{1 + 2 t}{n^{2} - n + 4} \quad &t > - \frac{1}{2}
		\end{cases}
	\end{equation*}
	Which is entailed by:
	\begin{equation}\label{eq:choices}
		a_{1} = \begin{cases}
			-2 \quad &t < - \frac{1}{2} \\
			0 \quad &t = - \frac{1}{2} \\
			1 \quad &t > - \frac{1}{2}
		\end{cases}
		\qquad\qquad
		a_{2} = \begin{cases}
			1 \quad &t \neq - \frac{1}{2} \\
			0 \quad &t = - \frac{1}{2}
		\end{cases}
	\end{equation}
	Finally by selecting the strict inequality, we force $ \mathring{Ric} = 0 $, i.e. $ g $ is Einstein.
	\\Now we distinguish between the cases $ t \leq - \frac{1}{2} $ and $ t > - \frac{1}{2} $. For the former, the optimal choices \eqref{eq:choices} are actually compatible with the requirement of $ Q_{2} = 0 $. Infact, again through \texttt{Mathematica}, we satisfy it by picking:
	\[ b_{1} = \frac{1}{n}+\frac{1}{2-2 n} - \frac{1}{n^2} \sqrt{\frac{3 n^2 (n-2) [4t - (1 + 4t)n]}{2(n-1)}} \qquad b_{2} = - \frac{n - 2}{n(n-1)} \qquad b_{3} = \frac{n^2-n-2}{2 n (n+1) (n-1)} \]
	for $ t < - \frac{1}{2} $. And by picking:
	\[
	b_{1} = \frac{1}{n} - \frac{1}{2 n} \sqrt{\frac{2(n-2)^2}{n^2+n-2}} - \frac{n}{n^2+n-2} \qquad b_{2} = b_{3} = -\frac{n-2}{2 (n^2+n-2)}
	\]
	for $ t = - \frac{1}{2} $. That is, there is no need of requiring $ R = const $ for any $ n \geq 3 $ and for any $ t \leq - \frac{1}{2} $. This concludes the proof of \thref{th:Io_1}.
	\\The case $ t > - \frac{1}{2} $ is more troubled. Indeed, the optimal choices \eqref{eq:choices} are incompatible with the requirement of $ Q_{2} = 0 $, since they yield complex solutions for any choice of $ b_{1} $, $ b_{2} $, $ b_{3} $, $ t > - \frac{1}{2} $ and $ n \geq 3 $. Hence, for $ t > - \frac{1}{2} $, we must assume $ R = const $ in order to keep the optimal bound on $ \epsilon $. This concludes the proof of \thref{th:Io_2}.
	\hfill \qedsymbol
	\vspace{0.83cm}

	\addtocontents{toc}{\vspace{2em}}
	\printbibliography
	
	\pagestyle{empty}
	\includepdf[pages=1,pagecommand={\section*{Appendix A} \label{ch:Code}}]{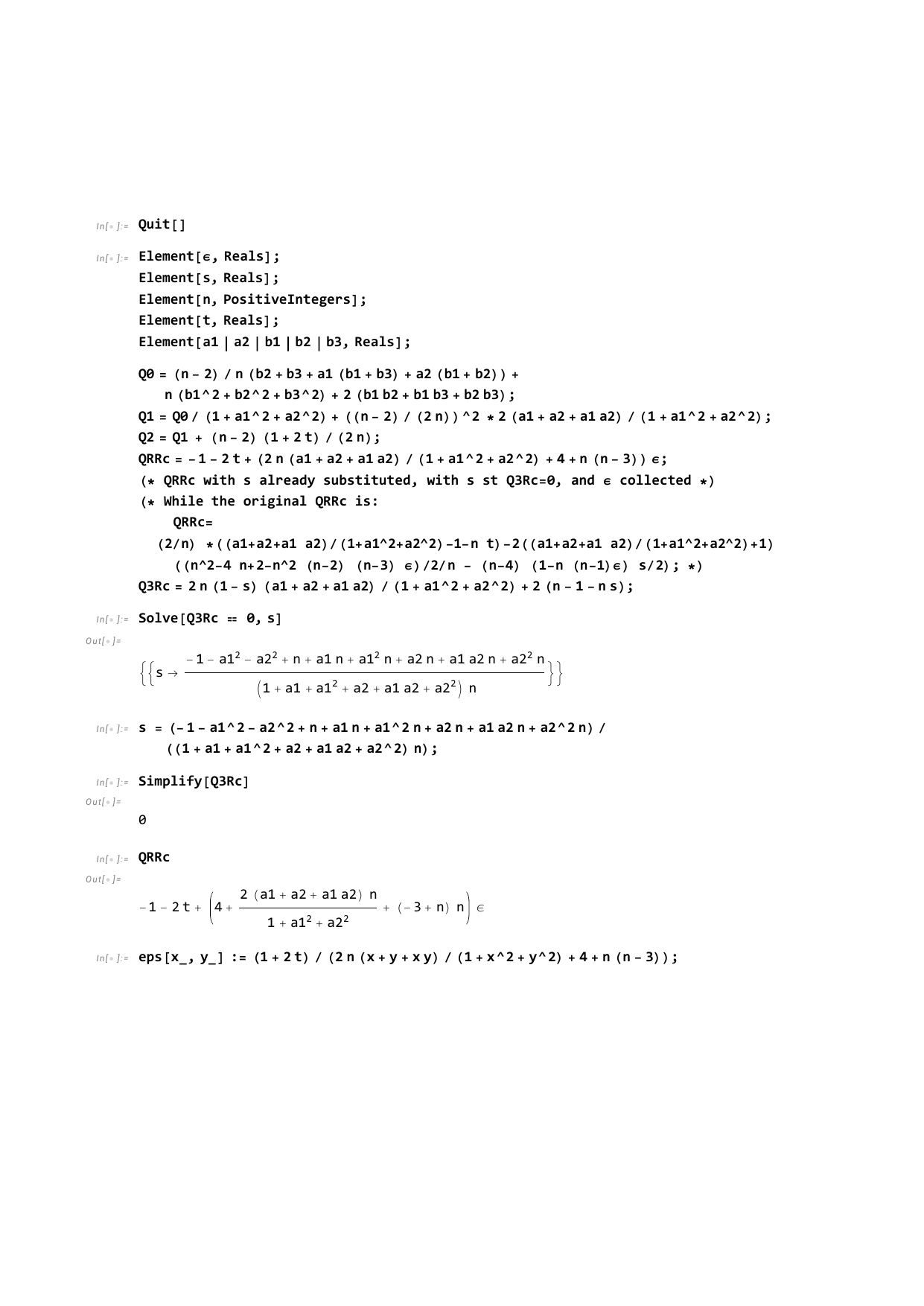}
	\includepdf[pages=2-]{code.pdf}
	
\end{document}